\theoremstyle{plain}
\newtheorem{prop}{Proposition}
\newtheorem{thm}{Theorem}
\newtheorem{lemma}{Lemma}
\def\order{{\mathrm{order}}}
\def\TSG{{\mathrm{TSG}}}
\def\Aut{{\mathrm{Aut}}}
\def\Diff{{\mathrm{Diff}}}
\def\int{{\mathrm{int}}}
\def\cl{{\mathrm{Cl}}}
\begin{document}

 \title{Topological Symmetry Groups of Graphs in $3$-Manifolds}
\author{Erica Flapan and Harry Tamvakis}

\subjclass {57M15; 57M60; 05C10; 05C25}
\thanks{The first author was supported in part by
NSF Grant DMS-0905087.}

\thanks{The second author was supported in part by
NSF Grant DMS-0901341.}

\address{Department of Mathematics, Pomona College,
Claremont, CA 91711, USA}

\email{eflapan@pomona.edu}

\address{Department of Mathematics, University of Maryland, College
Park, MD 20742, USA}

\email{harryt@math.umd.edu}

\date \today
 \maketitle
\begin{abstract} 
We prove that for every closed, connected, orientable, irreducible
3-manifold, there exists an alternating group $A_n$ which is not the
topological symmetry group of any graph embedded in the manifold. We
also show that for every finite group $G$, there is an embedding $\Gamma$ 
of some graph in a hyperbolic rational homology 3-sphere such that the
topological symmetry group of $\Gamma$ is isomorphic to $G$.
\end{abstract}

\section{Introduction} 
 Characterizing the symmetries of a molecule is an important step in
 predicting its behavior.  Chemists often model a molecule as a graph
 in $\mathbb{R}^3$.  They define the {\it point group} of a molecule
 as the group of isometries of $\mathbb{R}^3$ which take the molecular
 graph to itself.  This is a useful way of representing the symmetries
 of a rigid molecule.  However, molecules which are flexible or
 partially flexible may have symmetries which are not induced by
 isometries of $\mathbb{R}^3$. Jon Simon \cite {Si} introduced the
 concept of the {\it topological symmetry group} in order to study the
 symmetries of such non-rigid molecules.  This group has not only been
 used to study the symmetries of such molecules, but also the
 non-rigid symmetries of any graph embedded in $S^3$.  In this paper
 we extend this study to graphs embedded in other 3-manifolds.

In 1938, Frucht \cite{Fr} showed that every finite group is the
automorphism group of some connected graph.  By contrast, in
\cite{FNPT} we proved that the only finite simple groups which can
occur as the topological symmetry group of a graph embedded in $S^3$
are cyclic groups and the alternating group $A_5$.  Thus, we can say
that automorphism groups of connected graphs are {\it universal} for
finite groups, while topological symmetry groups of graphs embedded in
$S^3$ are not.  We now prove that topological symmetry groups of
graphs embedded in any given closed, connected, orientable,
irreducible $3$-manifold are not universal for finite groups.

In particular, let $M$ be a $3$-manifold, let $\gamma$ be an abstract
graph, and let $\Gamma$ be an embedding of $\gamma$ in $M$.  Note that
by a {\it graph} we shall mean a finite, connected graph with at
most one edge between any pair of vertices and two distinct vertices
for every edge. The {\it topological symmetry group} $\TSG(\Gamma, M)$
is defined to be the subgroup of the automorphism group $\Aut(\gamma)$
consisting of those automorphisms of $\gamma$ which are induced by a
homeomorphism of the pair $(M,\Gamma)$.  Allowing only orientation
preserving homeomorphisms defines the {\it orientation preserving
  topological symmetry group} $\TSG_+(\Gamma,M)$.  We prove the
following result.

\begin{thm}  
 \label{mainthm}
For every closed, connected, orientable, irreducible 3-manifold $M$,
there exists an alternating group $A_n$ which is not isomorphic to
$\TSG(\Gamma,M)$ for any graph $\Gamma$ embedded in $M$.
\end{thm}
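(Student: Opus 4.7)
\medskip\noindent\textbf{Plan.}
I would argue by contradiction. Fix $M$ as in the theorem and suppose $A_n \cong \TSG(\Gamma, M)$ for some embedded graph $\Gamma$; the goal is to bound $n$. The strategy has two stages: first, upgrade the topological symmetry group to a genuine orientation-preserving finite group action of $A_n$ on $M$ that preserves $\Gamma$; second, use geometrization together with the restrictive nature of finite group actions on geometric $3$-manifolds to conclude that only finitely many $A_n$ can act on a given $M$.

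For the first stage, note that since $A_n$ is simple for $n \geq 5$ and $\TSG_+(\Gamma, M)$ has index at most $2$ in $\TSG(\Gamma, M)$, we must have $\TSG_+(\Gamma, M) \cong A_n$, so every symmetry is realized by an orientation-preserving self-homeomorphism of $(M, \Gamma)$. Choosing a representative $h_\alpha$ for each $\alpha \in A_n$ gives elements of the mapping class group $\mathrm{MCG}(M, \Gamma)$ projecting onto $A_n$. I would then show that this surjection admits a splitting after a suitable equivariant isotopy of $\Gamma$, yielding an embedding $A_n \hookrightarrow \Diff(M)$ with $\Gamma$ invariant. The obstructions live in small cohomology groups of $A_n$, controlled by the Schur multiplier $H_2(A_n; \Z)$ (which has order $2$ for $n \geq 8$, $n \neq 6, 7$), and equivariant sphere- and torus-type theorems of Meeks--Simon--Yau and Meeks--Scott provide the isotopy tools.

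With a faithful action of $A_n$ on $M$ in hand, I would proceed by cases on the Thurston geometry of $M$. If $M$ is hyperbolic, Mostow rigidity makes the action conjugate to an isometric one, so $A_n \hookrightarrow \mathrm{Isom}_+(M)$, a finite group of bounded order---immediately bounding $n$. If $M$ is Seifert fibered, simplicity of $A_n$ forces either an effective action on the base $2$-orbifold (bounded by Hurwitz-type estimates) or containment in the cyclic fiber stabilizer (impossible for $n \geq 5$). If $M$ has a nontrivial JSJ decomposition, the equivariant torus theorem yields an $A_n$-action on the JSJ tree and on the pieces, and simplicity reduces matters to the preceding geometric cases.

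The main obstacle will be the first stage. Homeomorphism representatives $h_\alpha$ chosen independently need not compose compatibly, so extracting a genuine $A_n$-action on $M$ (rather than merely an action on the vertex set of $\Gamma$) requires controlling the kernel of $\mathrm{MCG}(M, \Gamma) \to \TSG(\Gamma, M)$ and showing the resulting extension of $A_n$ splits. Balancing cohomological rigidity of $A_n$ against the potentially complicated kernel, and doing so uniformly for all irreducible $M$, is the technical heart of the argument.
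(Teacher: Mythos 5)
Your second stage is in the right spirit, but your first stage --- promoting $\TSG(\Gamma,M)\cong A_n$ to a faithful finite group of diffeomorphisms of $M$ itself --- is a genuine gap, and in fact the paper proves (Theorem \ref{rmk1}) that this kind of realization is impossible in general: for every closed, orientable, irreducible $M$ that is not Seifert fibered there is a $3$-connected embedded graph whose $\TSG_+$ is not isomorphic to \emph{any} subgroup of $\Diff_+(M)$. The mechanism is that symmetries of $\Gamma$ can be induced by homeomorphisms supported in a ball (or, more generally, localized in pieces of the graph complement), so they carry no obligation to assemble into a global finite action on $M$. Concretely, the kernel of $\mathrm{MCG}(M,\Gamma)\to\TSG(\Gamma,M)$ is not a small abelian group to which Schur-multiplier/$H^2(A_n;\Z)$ arguments apply: it contains Dehn twists along essential spheres, tori and annuli in the complement and is typically infinitely generated and non-abelian, so there is no cohomological splitting argument of the kind you describe, and even a splitting at the mapping class group level would still leave the (generally unavailable) Nielsen realization step.

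What the paper does instead is locate a canonical compact piece $W$ of $\cl(M-N(\Gamma))$ on which the group genuinely acts, rather than acting on $M$. After arranging (Lemma \ref{incompress}) that $\partial N(\Gamma)-\partial'N(E)$ is incompressible, it applies the characteristic submanifold theory for pared manifolds to $\cl(M-N(\Gamma))$, uses simplicity of the group to rule out the $I$-fibered and Seifert pieces and to find an invariant simple pared piece $W$, hyperbolizes $W$ by Thurston, and invokes Mostow rigidity on the double to produce a finite isometry group $K\cong\TSG_+(\Gamma,M)$ of $W$. The action is then extended over balls, compressing disks, and solid tori to a closed manifold $W_2$ (or a homology sphere obtained by filling), at which point Zimmerman's theorem (only $A_5$ among non-abelian simple groups acts on a homology $3$-sphere), Kojima's bound, Hurwitz's $84|\chi|$ bound, and the classification of isometries of elliptic $3$-manifolds give a bound $d=d(M)$ on $|\TSG_+(\Gamma,M)|$ for every embedded $\Gamma$ with simple non-abelian $\TSG_+$; taking $n>d$ finishes the proof. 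Your case analysis by geometry mirrors the paper's final step, but without replacing your first stage by something like this pared-manifold localization the argument does not get off the ground.
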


On the other hand, it follows from Frucht's theorem \cite{Fr}
that every finite group $G$ can occur as the topological symmetry
group of a graph embedded in some closed 3-manifold (which depends on
$G$).  We prove the following stronger result.

\begin{thm}  
\label{univthm}
For every finite group $G$, there is an embedding $\Gamma$ of a graph
in a hyperbolic rational homology 3-sphere $M$ such that 
$\mathrm{TSG}(\Gamma,M) \cong G$.
\end{thm}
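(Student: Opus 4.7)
By Frucht's theorem, choose a finite connected graph $\gamma$ with $\Aut(\gamma) \cong G$. Since $\TSG(\Gamma,M)$ is by definition a subgroup of $\Aut(\gamma)$, any embedding $\Gamma$ of $\gamma$ in any 3-manifold $M$ automatically satisfies $\TSG(\Gamma,M) \leq G$. So the real task is to produce an embedding $\Gamma$ in some closed hyperbolic rational homology 3-sphere $M$ for which every element of $\Aut(\gamma)$ is induced by some self-homeomorphism of $(M,\Gamma)$.

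The plan is to first embed $\gamma$ equivariantly in an auxiliary 3-manifold and then perform equivariant Dehn surgery. I would realize $G = \Aut(\gamma)$ as a group of PL homeomorphisms of a regular neighborhood $N(\Gamma_0)$ of a PL embedding $\Gamma_0$ of $\gamma$; this handlebody action extends over the double $M_0 := N(\Gamma_0) \cup_{\partial} N(\Gamma_0)$ by acting in the same way on both copies. The outcome is a closed orientable 3-manifold $M_0$ with a $G$-action preserving a PL embedded copy of $\gamma$ on which $G$ induces the full automorphism group.

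Next, modify $M_0$ by equivariant Dehn surgery to make it both hyperbolic and a rational homology sphere, keeping $\Gamma_0$ intact. Choose a $G$-invariant link $L \subset M_0 \setminus \Gamma_0$ whose complement is hyperbolic; existence should follow from a $G$-equivariant version of Myers' theorem on hyperbolic knots in 3-manifolds. Thurston's hyperbolic Dehn surgery theorem then guarantees that for all but finitely many slopes on each $G$-orbit of components of $L$ the filled manifold is hyperbolic, and a generic further choice can be arranged so that $H_1(\cdot;\Q)$ vanishes, using the linking form on $L$. Picking $G$-equivariant filling slopes inside this generic set yields the desired $M$, with $\Gamma := \Gamma_0$.

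Because every surgery is $G$-equivariant and disjoint from $\Gamma_0$, the $G$-action descends to $M$, preserves $\Gamma$, and realizes every element of $\Aut(\gamma)$; combined with the automatic inclusion $\TSG(\Gamma,M) \leq G$ this gives $\TSG(\Gamma,M) \cong G$. The main obstacle is the equivariant Dehn surgery step: one must simultaneously satisfy the equivariance constraint, the hyperbolic surgery condition, and the rational homology sphere condition. Producing a $G$-equivariant Myers-type link $L$ with hyperbolic complement in the somewhat ad hoc manifold $M_0$ constructed by doubling appears to be the most delicate ingredient, and is where I expect the bulk of the technical work to lie.
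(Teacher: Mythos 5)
Your route is genuinely different from the paper's, and as written it has two real gaps. The paper quotes Cooper--Long to obtain, for each finite $G$, a hyperbolic rational homology sphere $M$ with a \emph{free} $G$-action, and then does only elementary work inside that fixed $M$: it builds a tripartite graph on the orbits of three points, embeds the edges by lifting generic arcs from $M/G$, subdivides edges to rigidify, and checks combinatorially that no automorphisms beyond $G$ are topologically realized. You instead try to build the manifold around the graph. The first gap is your opening step: for an arbitrary Frucht graph $\gamma$ with $\Aut(\gamma)\cong G$, the group $\Aut(\gamma)$ need \emph{not} act by PL homeomorphisms on a $3$-dimensional regular neighborhood of any embedding. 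The stabilizer of a vertex $v$ in such a finite action preserves a ball around $v$ and fixes $v$, hence is conjugate to an orthogonal action, so its induced permutation group on the edges at $v$ must be a quotient of a finite subgroup of $O(3)$. Already for $\gamma=K_{1,7}$ (where $\Aut(\gamma)=S_7$ acts on the edges at the center as the full symmetric group) no such group action exists; this local obstruction is precisely the phenomenon driving Theorem \ref{mainthm}. To salvage the step you must choose $\gamma$ so that $\Aut(\gamma)$ acts freely on vertices and edges (the Cayley-graph version of Frucht's construction does this), after which the covering-space description of the quotient handlebody gives the desired action on $N(\Gamma_0)$ and on its double $\#_g\, S^1\times S^2$. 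Plain ``Frucht's theorem'' does not suffice.

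The second gap is the equivariant surgery step, which you correctly identify as the crux but leave entirely unproved: you need a $G$-invariant link $L\subseteq M_0-\Gamma_0$ whose complement is finite-volume hyperbolic \emph{and} whose components rationally generate $H_1(M_0;\mathbb{Q})\cong\mathbb{Q}^g$ (otherwise no choice of filling slopes can kill the free part of $H_1$, since $M_0$ is a connected sum of copies of $S^1\times S^2$), together with equivariant slopes in the generic set of Thurston's Dehn surgery theorem. An equivariant Myers-type theorem with this homological control is a substantial piece of technology --- essentially comparable in depth to the Cooper--Long result the paper cites --- so the proposal as it stands replaces the one external input of the published proof with a harder unproved one. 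There is also a small point you gloss over at the end: after surgery you must still verify $\mathrm{TSG}(\Gamma,M)\le\Aut(\gamma)$ gives equality; that part is fine, since $\mathrm{TSG}$ is by definition a subgroup of $\Aut(\gamma)\cong G$ and your $G$-action realizes all of it. The architecture could be made to work, but both the choice of $\gamma$ and the equivariant hyperbolization/homology step need genuine arguments that are currently missing.
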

  
   A graph is said to be {\it 3-connected} if at least 3
   vertices must be removed together with the edges they contain in
   order to disconnect the graph or reduce it to a single vertex.  In
   \cite{FNPT}, we showed that for every embedded 3-connected graph
   $\Gamma$ embedded in $S^3$, there is a subgroup of
   $\Diff_+(S^3)$ isomorphic to $\TSG_+(\Gamma,S^3)$.  We now
   prove this is not the case for all 3-manifolds.
   
 \begin{thm}
 \label{rmk1}
For every closed, orientable, irreducible, $3$-manifold $M$ that is
not Seifert fibered, there is an embedding of a $3$-connected graph
$\Gamma$ in $M$ such that $\TSG_+(\Gamma,M)$ is not isomorphic to any
subgroup of $\Diff_+(M)$.
 \end{thm}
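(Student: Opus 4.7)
The plan is to realize a cyclic group $\Z/n$ of sufficiently large order as a subgroup of $\TSG_+(\Gamma,M)$ for a suitably chosen $3$-connected graph $\Gamma$ in $M$, and then to observe that this cyclic group cannot be a subgroup of $\Diff_+(M)$.

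First I would invoke, as a consequence of Thurston's geometrization together with Mostow rigidity and the equivariant JSJ theorem, the fact that there is a constant $N=N(M)$ that bounds the orders of all finite subgroups of $\Diff_+(M)$. In the hyperbolic case this is immediate because $\mathrm{Isom}(M)$ is finite and every finite subgroup of $\Diff_+(M)$ is conjugate into $\mathrm{Isom}(M)$ by the orbifold theorem. In the mixed or graph-manifold case the JSJ decomposition of $M$ is non-trivial and is preserved up to isotopy by any finite group action, so the action restricts to each JSJ piece; hyperbolic pieces again produce a bound via Mostow, and in the all-Seifert case the incompatibility of the Seifert fibrations across the JSJ tori (which is precisely what prevents $M$ from being globally Seifert fibered) forces the orders of fiber-preserving finite actions to be bounded. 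Choose $n>N(M)$, so that $\Z/n$ does not embed in $\Diff_+(M)$.

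Next I would take $\Gamma$ to be the wheel graph $W_n$ (an $n$-cycle together with an apex vertex joined to every cycle vertex), embedded in standard planar position inside a small ball $B\subset M$, with the cycle on a round circle and the apex at its center. The graph $W_n$ is $3$-connected for every $n\ge 3$. Using a smooth radial damping function, I would construct an orientation-preserving self-diffeomorphism of $B$ that coincides with rotation by $2\pi/n$ about the axis through the apex on a neighborhood of $\Gamma$ and equals the identity near $\partial B$; extending by the identity on $M\setminus B$ yields an orientation-preserving self-homeomorphism of $(M,\Gamma)$ whose induced automorphism of $\Gamma$ has order $n$. Hence $\TSG_+(\Gamma,M)$ contains a subgroup isomorphic to $\Z/n$.

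If $\TSG_+(\Gamma,M)$ were isomorphic to some subgroup of $\Diff_+(M)$, its $\Z/n$ subgroup would in particular embed in $\Diff_+(M)$, contradicting the choice $n>N(M)$. Therefore $\TSG_+(\Gamma,M)$ is not isomorphic to any subgroup of $\Diff_+(M)$, as required. The main obstacle in this plan is establishing the bound $N(M)$: the hyperbolic case follows cleanly from Mostow and the orbifold theorem, but the graph-manifold case, in which every JSJ piece happens to be Seifert fibered, requires a careful equivariant analysis of how adjacent Seifert fibrations interact across the JSJ tori, and it is exactly here that the hypothesis that $M$ itself is not Seifert fibered is used.
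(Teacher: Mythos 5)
Your proposal is correct and takes essentially the same approach as the paper: bound the orders of finite subgroups of $\Diff_+(M)$ using the hypothesis that $M$ is irreducible and not Seifert fibered, then plant a $3$-connected graph inside a ball together with a damped rotation supported in that ball, so that $\TSG_+(\Gamma,M)$ contains a cyclic group of order exceeding the bound. The bound you flag as the main obstacle is precisely Kojima's theorem (which the paper invokes after observing, via geometrization and elliptization, that $M$ has infinite $\pi_1$, admits no circle action, and is irreducible), so you may simply cite it instead of redoing the equivariant JSJ analysis; the only other difference is that the paper uses a symmetric embedding of $K_p$ (citing Flapan's earlier work) where you use the wheel graph, which works just as well and is arguably simpler.
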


\section{Proof of Theorem \ref{mainthm}}  
 
 We begin by introducing some notation.  Let $\Gamma$ be an embedding
of a graph in a $3$-manifold $M$.  Let $V$ denote the set of embedded
vertices of $\Gamma $, and let $E$ denote the set of embedded edges of
$\Gamma $.  We construct a neighborhood $N(\Gamma )$ as the union of
two sets, $N(V)$ and $N(E)$, which have disjoint interiors. In particular, for each
vertex $v\in V$, let $N(v)$ denote a ball around $v$ whose
intersection with $\Gamma$ is a star around $v$, and let $N(V)$ denote
the union of all of these balls.  For each embedded edge $e \in E$,
let $N(e )$ denote a solid cylinder $D^{2}\times I$ whose core is
$e-N(V)$, such that $N(e )\cap \Gamma\subseteq e$, and $N(e)$ meets
$N(V)$ in a pair of disks.  Let $N(E)$ denote the union of all the
solid cylinders $N(e )$.  Let $N(\Gamma )=N(V)\cup N(E)$. such shall
use $\partial 'N(e )$ to denote the annulus $\partial N(\Gamma )\cap
N(e )$ in order to distinguish it from the sphere $\partial N(e)$.

By the standard smoothing results in dimension 3 (proved in
\cite{Moi}), if a particular automorphism of an embedded graph
$\Gamma$ can be induced by an orientation preserving homeomorphism of
$(M,\Gamma)$, then the same automorphism can be induced by an
orientation preserving homeomorphism of $(M,\Gamma)$ which is a
diffeomorphism except possibly on the set of vertices of $\Gamma$.
Thus we shall abuse notation and call such a homeomorphism a {\it
  diffeomorphism} of $(M, \Gamma)$.  
\smallskip

 \begin{lemma}
\label{ball}  
Let $\Gamma$ be a graph embedded in ball $B\subseteq M$ an
orientable, irreducible, $3$-manifold, and suppose that $\TSG_+(\Gamma,M)$
is a non-abelian simple group.  Then $\TSG_+(\Gamma,M)\cong A_5$.
\end{lemma}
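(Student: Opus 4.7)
The plan is to reduce the lemma to the corresponding statement for graphs in $S^3$ and then invoke the results of \cite{FNPT}. Since $\Gamma$ lies in a $3$-ball $B\subseteq M$, I would view $\Gamma$ also as embedded in $S^3$ by regarding $B$ as a standard $3$-ball in $S^3$. The main step would be to establish $\TSG_+(\Gamma,M) = \TSG_+(\Gamma,S^3)$ as subgroups of $\Aut(\gamma)$. Granting this, since the only non-abelian finite simple group realizable as the topological symmetry group of a graph in $S^3$ is $A_5$ by \cite{FNPT}, the hypothesis would force $\TSG_+(\Gamma,M)\cong A_5$.

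For the inclusion $\TSG_+(\Gamma,M)\subseteq \TSG_+(\Gamma,S^3)$, I would take an orientation-preserving diffeomorphism $f$ of $(M,\Gamma)$ representing a class $g$ and first modify it, by an ambient isotopy of $M$ fixing $\Gamma$ pointwise, to a diffeomorphism $f'$ with $f'(B)=B$. Both $B$ and $f(B)$ are $3$-balls in the irreducible manifold $M$ whose interiors contain $\Gamma$, and I plan to use standard innermost-disk arguments applied to $\partial B \cap \partial f(B)$ inside $M\setminus \Gamma$, together with irreducibility, to produce the required isotopy. The restriction $f'|_B$ is then a diffeomorphism of the pair $(B,\Gamma)$, which I would extend to a diffeomorphism $F$ of $(S^3,\Gamma)$ by extending $f'|_{\partial B}$ across the complementary $3$-ball $S^3\setminus \int(B)$ (a ball by Schoenflies); such an extension exists because $\Diff_+(S^2)$ is path connected, so $f'|_{\partial B}$ is isotopic to the identity and the isotopy can be used to define the extension on a collar of $\partial B$.

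The reverse inclusion $\TSG_+(\Gamma,S^3)\subseteq \TSG_+(\Gamma,M)$ is symmetric: given a diffeomorphism of $(S^3,\Gamma)$, I would modify it to preserve $B$, restrict to $(B,\Gamma)$, and extend over $M\setminus \int(B)$ by the analogous collar construction, using once more that any orientation-preserving diffeomorphism of $\partial B = S^2$ is isotopic to the identity. The main obstacle I foresee is the ball-uniqueness step itself: the Cerf/Palais theorem furnishes an ambient isotopy of $M$ carrying $f(B)$ onto $B$, but upgrading this to an isotopy that fixes $\Gamma$ pointwise requires more delicate 3-manifold topology and is the most subtle part of the argument.
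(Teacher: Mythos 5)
Your proposal is correct and follows essentially the same route as the paper: both reduce the lemma to proving $\TSG_+(\Gamma,M)=\TSG_+(\Gamma,S^3)$ and then quote \cite{FNPT}. The one step you flag as subtle --- isotoping $f(B)$ back to $B$ rel $\Gamma$ --- is exactly the step the paper dispatches in one line by observing that $M-\Gamma$ is the connected sum of the irreducible manifolds $M$ and $S^3-\Gamma$, so the splitting sphere $\partial B$ is unique up to isotopy in $M-\Gamma$ (the same fact your innermost-disk argument would establish); and for the reverse inclusion the paper sidesteps your collar extension over $M-\mathrm{int}(B)$ by first isotoping the given diffeomorphism of $(S^3,\Gamma)$ to be the identity on a neighborhood of a point off $\Gamma$, hence pointwise on $\partial B$ and outside $B$, and then extending by the identity.
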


\begin{proof}   
If $M=S^3$ the result follows from \cite{FNPT}.  Thus we assume
$M\not =S^3$.  We obtain an embedding of $\Gamma$ in $S^3$ by gluing a
ball to the outside of $B$.  Since $\Gamma$ is connected, $S^3-\Gamma$
is irreducible.  Thus $M-\Gamma$ is the connected sum of the
irreducible manifolds $M$ and $S^3-\Gamma$.  Hence the splitting
sphere $\partial B$ is unique up to isotopy in $M -\Gamma$.  Thus
$\TSG_+(\Gamma, M)$ is induced by a group $G$ of orientation
preserving diffeomorphisms of $(M,\Gamma)$ which takes $B$ to itself.

Since the restriction of $G$ to $B$ extends radially to $S^3$ taking
$\Gamma$ to itself, it follows that $\TSG_+(\Gamma,M) \leq
\TSG_+(\Gamma,S^3)$.  We show containment in the other direction as
follows.  Let $p$ be a point of $S^3$ which is disjoint from $\Gamma$.
Let $g$ be an orientation preserving diffeomorphism of the pair
$(S^3,\Gamma)$.  We can compose $g$ with an isotopy to obtain an
orientation preserving diffeomorphism $h$ of the pair $(S^3,\Gamma)$
which pointwise fixes a neighborhood of $p$ disjoint from $\Gamma$ and
induces the same automorphism of $\Gamma$ as $g$.  It follows that
$\TSG_+(\Gamma,S^3)$ can be induced by a group $G'$ of orientation
preserving diffeomorphisms of $(S^3,\Gamma)$ which take the ball $B$
to itself fixing its boundary pointwise.  We may therefore restrict
the elements of $G'$ to $B$ and then extend them to $M$ by the
identity.  It follows that $\TSG_+(\Gamma,M) = \TSG_+(\Gamma,S^3)$,
and hence by \cite{FNPT} we have $\TSG_+(\Gamma,M)\cong
A_5$.  \end{proof}

 \smallskip

\begin{lemma}
 \label{incompress}
 Let $\Gamma$ be a graph embedded in a closed, orientable, irreducible
 $3$-manifold $M$ such that $\TSG_+(\Gamma,M)$ is a non-abelian simple
 group.  Then there is a graph $\Lambda$ embedded in $M$ with
 $\TSG_+(\Lambda,M)\cong \TSG_+(\Gamma,M)$ such that $\partial
 N(\Lambda)-\partial'N(E)$ is incompressible in $\cl(M-N(\Lambda))$.
 \end{lemma}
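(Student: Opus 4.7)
The plan is to reduce to the incompressible case by iterated equivariant vertex splitting along compressing disks.  If $\partial N(\Gamma) - \partial' N(E)$ is already incompressible in $\cl(M - N(\Gamma))$, set $\Lambda = \Gamma$.  Otherwise, by general position we may choose a compressing disk $D$ whose boundary is an essential simple closed curve on $\partial N(v) - \partial' N(E)$ for some single vertex $v$; such a curve separates the edge-attaching disks of $\partial N(v)$ into nonempty subsets $E_1, E_2$.

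From $(v, D)$ I construct a modification $\Gamma'$ as follows.  The curve $\partial D$ also bounds a disk $D'$ in $\partial N(v)$; by irreducibility the sphere $D \cup D'$ bounds a ball, so the pair $(D, D')$ cuts $N(v)$ into two subballs joined by a tube through $D$.  Declare the centers of the subballs new vertices $v_1, v_2$, with $v_i$ inheriting the edges in $E_i$, and insert a new edge $e_0$ between them running through the tube.  Let $G := \TSG_+(\Gamma, M)$.  To preserve the symmetry group, perform this splitting simultaneously along the entire orbit of $(v, D)$ under $G$; the orbit disks can be arranged pairwise disjoint by an innermost-disk argument carried out orbit-wise (or by equivariant-Dehn-lemma-type reasoning).

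The construction gives an injection $G \hookrightarrow \TSG_+(\Gamma', M)$ by equivariance, and conversely the graph map $\Gamma' \to \Gamma$ contracting each new edge $e_0$ induces a homomorphism $\TSG_+(\Gamma', M) \to \TSG_+(\Gamma, M) = G$ whose composition with the injection is the identity; non-abelian simplicity of $G$ then forces the homomorphism to have no kernel, giving $\TSG_+(\Gamma', M) \cong G$.  I expect the main obstacles to be (a) choosing the $G$-orbit of compressing disks disjointly and consistently, since $G$ is a priori only a subgroup of $\Aut(\gamma)$ rather than a group acting on $M$, and (b) identifying a complexity measure that strictly decreases under each splitting so that the iteration terminates.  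A candidate for the latter is the total number of pairwise non-isotopic essential compressing curves across the vertex components of $\partial N - \partial' N(E)$; a subtler point will be handling local knottings on edges, which can persist after vertex splits.  Granting these, repeated application produces the required graph $\Lambda$.
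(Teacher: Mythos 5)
There is a genuine gap, and it is fatal to the approach rather than just a missing detail. The central problem is that vertex splitting does not remove the compression. Your compressing disk $D$, capped off by the disk $D'\subseteq \partial N(v)$, gives a sphere $S$ in $M$ with edges of $\Gamma$ on both sides. After you split $v$ into $v_1,v_2$ and run the new edge $e_0$ through the tube, the sphere $S$ is still there and now meets $\Gamma'$ transversally in one interior point of $e_0$ (it must, since $E_1$ and $E_2$ lie on opposite sides of $S$). Removing $S\cap N(e_0)$ and pushing the boundary onto $\partial N(v_1)$ produces a new compressing disk whose boundary is parallel to the $e_0$-attaching circle, which is essential on $\partial N(v_1)-\partial'N(E)$ whenever $|E_1|\geq 2$. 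So each split regenerates compressing disks at the new vertices: for a wedge of two circles each contained in its own ball, one essential compression becomes two, and iterating simply pushes the separating sphere along an ever-growing chain of new edges. No reasonable complexity decreases, and the iteration never terminates in an incompressible configuration. This is exactly the ``local knotting persists'' issue you flagged, but it is not a subtlety to be handled later --- it shows the move cannot achieve the stated conclusion. The paper's proof instead \emph{deletes}: it observes that a compression yields a separating ball at a vertex $v$ and a decomposition of $\Gamma$ into branches at $v$, uses simplicity to rule out the case where $\TSG_+(\Gamma,M)$ fixes $v$ but permutes an orbit of branches with full symmetric image, and then replaces $\Gamma$ by a single branch (or by $\Gamma$ minus an entire orbit of branches), with many valence-two vertices inserted to prevent new symmetries. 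Deletion genuinely kills the separating sphere; splitting does not.

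Two further steps are also unsound as written. First, your group-theoretic conclusion is invalid: from a retraction $r\colon \TSG_+(\Gamma',M)\to G$ with $r\circ i=\mathrm{id}_G$ you can only conclude $\TSG_+(\Gamma',M)\cong \ker(r)\rtimes G$; simplicity of $G$ constrains normal subgroups of $G$, not of $\TSG_+(\Gamma',M)$ (note $G\times \mathbb{Z}_2$ retracts onto $G$). You would need to prove directly that $r$ is injective and well defined --- in particular that every homeomorphism of $(M,\Gamma')$ preserves the set of new edges, which is why the paper subdivides edges by a large number $m$ of valence-two vertices. Second, ``the orbit of $(v,D)$ under $G$'' is undefined: $G=\TSG_+(\Gamma,M)$ is a group of abstract automorphisms, not a group acting on $M$, so there is no canonical family of disks at the other vertices in the orbit of $v$; the paper's branch decomposition is canonical and sidesteps this entirely.
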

 
\begin{proof}  The proof of this lemma is similar to the proof of a stronger result
\cite[Prop.\ 2]{FNPT} for $M=S^3$. We will therefore give only the
main ideas of the argument, with many of the details omitted.

We say $\Gamma$ has a {\it separating ball} $B$, if $B$
meets $\Gamma$ in a single vertex $v$ with valence at least 3 and $\mathrm{Int}(B)$ and $M-B$ each have non-empty intersection with
$\Gamma$.  In this case, we say $B\cap \Gamma$ is a {\it branch} of
$\Gamma$ at $v$.

Suppose that $\partial N(\Gamma)-\partial'N(E)$ is compressible in
$\cl(M-N(\Gamma))$.  Then there is a sphere in $M$ meeting $\Gamma$ in
a single point such that each complementary component of the sphere intersects
$\Gamma$ non-trivially.  Since $M$ is irreducible, one of these
components is ball.  Also, $\Gamma$ cannot be
homeomorphic to an arc because $\TSG_+(\Gamma,M)$ is non-abelian.
Thus $\Gamma$ has a separating ball at some vertex $v$.

First suppose, for the sake of contradiction, that $\TSG_+(\Gamma,M)$
fixes $v$ but does not setwise fix some branch $\Gamma_1$ at $v$.  Let
$\{\Gamma_1,\dots,\Gamma_n\}$ denote the orbit of $\Gamma_1$ under
$\TSG_+(\Gamma,M)$.  Now the action of $\TSG_+(\Gamma,M)$ on
$\{\Gamma_1,\dots,\Gamma_n\}$ defines a non-trivial monomorphism
$\Phi: \TSG_+(\Gamma,M)\rightarrow S_n$.  To see that $\Phi$ is onto,
let $(ij)$ be a transposition in $S_n$.  Since $\Gamma_i$ and
$\Gamma_j$ are in the orbit of $\Gamma_1$ under $\TSG_+(\Gamma,M)$, we
can choose a pair of separating balls $B_i$ and $B_j$ which are
disjoint except at $v$ such that there is an orientation preserving
diffeomorphism $g$ of $(M, \Gamma)$ with
$g((B_i,\Gamma_i))=(B_j,\Gamma_j)$.  We can then choose a separating
ball $E$ containing $B_i\cup B_j$ such that $\partial E\cap (B_i\cup
B_j)=\{v\}$.  Now define an orientation preserving diffeomorphism
$h:(E,\Gamma_i\cup \Gamma_j)\to E$ such that $h|B_i=g|B_i$, $h|B_j=g^{-1}|B_j$, and
$h|\partial E$ is the identity.  Finally, extend $h$ to $M-E$ by the
identity.  Thus we have $h:(M,\Gamma)\rightarrow (M,\Gamma)$ and $h$
induces $(ij)$ on $\Gamma$, and therefore $\Phi$ is onto.  Since
$\TSG_+(\Gamma,M)$ is simple and non-abelian, this is impossible.

Let $m$ denote an integer which is larger than the number of vertices
in $\Gamma$.  We will show that there is another graph $\Gamma'$
in $M$ such that $\TSG_+(\Gamma',M)\cong \TSG_+(\Gamma,M)$ and
$\Gamma'$ has fewer (possibly 0) branches at $v$.  By repeating this
argument as necessary one eventually obtains the embedded graph
$\Lambda$.  By the above paragraph, we only need to consider the
following two cases.

\smallskip

{\bf Case 1:}  $\TSG_+(\Gamma,M)$ fixes $v$ and setwise fixes every branch of $\Gamma$ at $v$.
\smallskip

Since $\TSG_+(\Gamma,M)$ is non-trivial, there is some branch $\Gamma_1$ at $v$ on which $\TSG_+(\Gamma,M)$ acts non-trivially.   Let $e_1$ denote an
edge in $\Gamma_1$ containing $v$, and let $\{e_1,\dots, e_r\}$ be
the orbit of $e_1$ under $\TSG_+(\Gamma,M)$.  We define $\Gamma'$ as
$\Gamma_1$ together with $m$ vertices of valence 2 added to the
interior of each $e_i$.  We can define a non-trivial monomorphism
$\psi: \TSG_+(\Gamma,M)\rightarrow \TSG_+(\Gamma',M)$ since each
automorphism of $\Gamma$ induces an automorphism of $\Gamma'$. Because
of the $m$ vertices on each $e_i$, every automorphism of $\Gamma'$
fixes $v$.  It is not hard to check that $\psi$ is onto, and $\Gamma'$
has fewer branches at $v$ than $\Gamma$.

\smallskip

{\bf Case 2:}  $v$ is not fixed by $\TSG_+(\Gamma,M)$.

\smallskip

Let $\{\Gamma_1,\dots,\Gamma_n\}$ denote the orbit of $\Gamma_1$ under
$\TSG_+(\Gamma,M)$.  Let $e_1$ denote an edge of the graph 
$\mathrm{cl}(\Gamma-(\Gamma_1\cup\dots\cup\Gamma_n))$ that contains
$v$, and let $\{e_1,\dots, e_r\}$ and $\{v_1,\dots, v_q\}$ denote the
orbits of $e_1$ and $v$ respectively under $\TSG_+(\Gamma,M)$. Now
define $\Gamma'$ as
$\mathrm{cl}(\Gamma-(\Gamma_1\cup\dots\cup\Gamma_n))$ together with
$m$ vertices of valence 2 added to the interior of each $e_i$.  We can
define a non-trivial monomorphism $\psi: \TSG_+(\Gamma,M)\rightarrow
\TSG_+(\Gamma',M)$ since each automorphism of $\Gamma$ induces an
automorphism of $\Gamma'$.  Since, $\TSG_+(\Gamma',M)$ cannot be cyclic, $\Gamma'$ cannot be a simple closed curve.  Thus every automorphism of $\Gamma'$ leaves $\{v_1,\dots, v_q\}$
setwise invariant.  Now it is not hard to check that $\psi$ is onto
and $\Gamma'$ has fewer branches at $v$ than $\Gamma$.\end{proof}
 
 \bigskip

In addition to Lemmas 1 and 2, our proof of the next result will 
employ Jaco--Shalen \cite{JS} and Johannson's \cite{Jo} theory of
characteristic splittings along tori and annuli.  For a survey of the
definitions and statements see \cite{Bo1}.
 
 \smallskip

 \begin{prop}  
 \label{mainprop}
Let $M$ be a closed, connected, orientable, irreducible $3$-manifold.
Then there exists an integer $d$ depending only on $M$ such that for any graph
$\Gamma$ embedded in $M$ if $\TSG_+(\Gamma,M)$ is a
non-abelian simple group, then the order of $\TSG_+(\Gamma,M)$ is at
most $d$.
\end{prop}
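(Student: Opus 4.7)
The plan is to reduce, via Lemmas \ref{ball} and \ref{incompress}, to a graph whose exterior has incompressible non-annular boundary, apply the Jaco--Shalen--Johannson decomposition equivariantly to control the action of $G := \TSG_+(\Gamma,M)$, and then bound $|G|$ using the geometry of a single preserved piece, where that geometry is in turn controlled by an invariant of $M$. If $\Gamma$ lies in a ball, Lemma \ref{ball} already gives $G \cong A_5$, so we may assume this is not the case. Lemma \ref{incompress} then produces $\Lambda$ with $\TSG_+(\Lambda,M) \cong G$ and with $\partial N(\Lambda) - \partial'N(E)$ incompressible in $X := \cl(M - N(\Lambda))$. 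Since $X$ is compact, orientable, irreducible, and has incompressible non-annular boundary, its characteristic JSJ decomposition provides a canonical finite system of essential tori and annuli splitting $X$ into Seifert fibered and simple pieces.

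By the canonicity, and hence diffeomorphism-equivariance, of this splitting we may arrange $G$ to act on $X$ preserving the decomposition, so that $G$ acts on its dual tree $T$. Since $G$ is finite, simple, and non-abelian, in particular it has no subgroup of index $2$, and a standard Bass--Serre argument (after barycentric subdivision if necessary) forces a fixed vertex of $T$: equivalently, some piece $Y$ is setwise preserved by $G$. The restriction gives a homomorphism from $G$ into the mapping class group of $Y$ whose kernel is normal in $G$, hence trivial or equal to all of $G$.

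In the trivial-kernel case, $G$ embeds into this mapping class group. If $Y$ is hyperbolic with totally geodesic boundary, Mostow--Prasad rigidity identifies it with the finite isometry group $\mathrm{Isom}(Y)$, whose order is bounded by a universal constant times $\mathrm{vol}(Y)$; and $\mathrm{vol}(Y) \leq v_3 \|M\|$, where $\|M\|$ is the Gromov simplicial volume, an invariant of $M$ alone. If $Y$ is Seifert fibered, $G$ descends to a finite simple non-abelian subgroup of the isometry group of its base $2$-orbifold, which is likewise bounded by data extracted from $M$ (via a Hurwitz-type estimate together with a bound on the complexity of Seifert pieces of $X$ coming from $M$'s JSJ structure).

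The main obstacle is the remaining case, where the restriction to every $G$-preserved piece has kernel equal to $G$. Then every element of $G$ acts on each such piece by a map isotopic to the identity; one must show that these local trivial isotopies can be glued across the JSJ tori and annuli, and extended over $N(\Lambda)$, into an isotopy of $M$ from each element of $G$ to the identity. If so, each element of $G$ induces the identity automorphism of $\Lambda$, contradicting the non-triviality of $G$. The incompressibility from Lemma \ref{incompress} together with the rigidity of JSJ gluings are the key inputs, and making this patching argument precise is the technical heart of the proof.
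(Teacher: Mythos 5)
Your outline tracks the paper's strategy up to the point of finding a $G$-invariant piece of the characteristic decomposition of the graph exterior, but it fails at the decisive step: the claim that the geometry of that piece is controlled by an invariant of $M$ alone. The pieces of the characteristic decomposition of $X=\cl(M-N(\Lambda))$ depend on the embedding $\Lambda$, not just on $M$. In particular the inequality $\mathrm{vol}(Y)\leq v_3\|M\|$ is false: already for $M=S^3$ (where $\|M\|=0$) graph exteriors can be hyperbolic with arbitrarily large volume, so bounding $|\mathrm{Isom}(Y)|$ by a constant times $\mathrm{vol}(Y)$ yields nothing depending only on $M$. The same objection defeats your Seifert case, since the complexity of Seifert pieces of $X$ is likewise not bounded by $M$'s JSJ structure. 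You also leave unresolved the case where $G$ acts on the preserved piece with full kernel; in the paper this cannot arise because the piece $W$ is chosen so that $G$ permutes non-trivially either the adjacent annuli of $\Omega$ or the components of $\partial N(V)\cap W$, and faithfulness of the resulting isometric action is then forced by a planar boundary surface with at least three boundary circles, on which a finite-order diffeomorphism fixing each boundary component setwise must be the identity.

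The paper's actual resolution of the key step is quite different. After ruling out the $I$-fibered and Seifert possibilities for the distinguished pared piece $(W, W\cap(P\cup\Omega))$ (by constructing surjections onto symmetric groups, contradicting simplicity), it hyperbolizes $W$ rel its pared locus and realizes $\TSG_+(\Gamma,M)$ faithfully as a finite isometry group $K$ of $W$. Crucially, it then \emph{extends} $K$ over the vertex and edge neighborhoods, equivariant compressing disks, balls, and solid tori to obtain a finite simple non-abelian action either on a homology sphere --- where Zimmerman's theorem forces the group to be $A_5$, of order $60$ --- or on a submanifold whose boundary tori are isotopic into $M$'s own characteristic family $\Theta$ or vertical in its Seifert pieces, where bounds such as $m!$ and $84|\chi|$ depending only on $M$ apply. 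It is this extension-and-refilling argument, entirely absent from your proposal, that transfers the problem from the $\Gamma$-dependent exterior back to $M$ itself; without it, no bound depending only on $M$ is obtained.
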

 
 \begin{proof}  
We shall assume that $\Gamma$ is not contained in a ball in $M$, since otherwise the result
follows from Lemma \ref{ball}.  Since the proof is lengthy we divide it
into six steps.
\smallskip

\noindent {\bf Step 1:} We pick the number $d$.

\smallskip

 Apply the Characteristic Submanifold Theorem \cite{JS,Jo} to $M$
to obtain a minimal collection $\Theta$ of incompressible tori in $M$
such that the closure of each component of $M-\Theta$ is either
atoroidal or Siefert fibered, and $\Theta$ is unique up to isotopy.
If $\Theta$ is non-empty, let $C_1,\dots, C_m$ denote the closures of
the components of $M-\Theta$, otherwise let $m=1$ and $C_1=M$.  Now we
consider two cases.
 
 \smallskip

\noindent{\bf Case 1:}  Either $\Theta$ is non-empty or $\Theta$ is empty and $M$ is
Seifert fibered.
\smallskip

  Consider a component $C_i$ which is Seifert fibered (possibly
$C_i=M$).  We see as follows that there is an upper bound $n_i$ on the
order of all finite simple non-abelian groups which can act faithfully
on the base space of the fibration of $C_i$.  If the base space is a
sphere or projective plane (possibly with holes), then $n_i=60$ is an
upper bound since $A_5$ is the only finite simple non-abelian group
which can act faithfully on a sphere or projective plane.  If the base
space has negative Euler characteristic $\chi$, then $n_i=84|\chi |$
is an upper bound by a classical result of Hurwitz \cite{Hu}.  Since
no finite simple non-abelian groups can act faithfully on a torus or
Klein bottle (possibly with holes), if the base space is one of these
surfaces we let $n_i=1$.
 
 Now consider a component $C_i$ which is not Seifert fibered.  By the
hypotheses of this case then $C_i\not =M$. It follows that $C_i$ is
atoroidal and has non-empty incompressible boundary.  Hence, by
Thurston's Hyperbolization Theorem \cite{Th}, $C_i$ admits a complete,
hyperbolic structure with totally geodesic boundary.  Now by Mostow's
Rigidity Theorem \cite{Mos}, there is an integer $n_i$ such that no
finite group of diffeomorphisms of $C_i$ has order greater than $n_i$.
 
After having chosen an $n_i$ associated with each component $C_i$, we let
$d=\mathrm{Max}\{n_1,\dots, n_m, m!, 60\}$. 

\smallskip

\noindent{\bf Case 2:} $\Theta$ is empty and $M$ is not Seifert fibered.
\smallskip

By the Geometrization Theorem \cite{MF, MT, MT2}, $M$ has a geometric
structure.  Also since $M$ is irreducible and not Seifert fibered, $M$
does not admit a circle action.  Furthermore, by the Elliptization
Theorem \cite{MT}, a 3-manifold with finite fundamental group is
elliptic and hence Seifert fibered.  Thus $M$ has infinite $\pi_1$, no
circle action, and is irreducible.  Hence by Kojima \cite{Ko} there is
a bound $q$ on the order of finite groups of diffeomorphisms of $M$.
In this case, we let $d=\mathrm{Max}\{q, 60\}$.

\smallskip

Now let $\Gamma$ be a graph embedded in $M$ such that
$\TSG_+(\Gamma,M)$ is a simple non-abelian group.  By Lemma
\ref{incompress}, without loss of generality we can assume that
$\partial N(\Gamma)-\partial'N(E)$ is incompressible in
$\cl(M-N(\Gamma))$.  We will prove that $\order(\TSG_+(\Gamma,M)) \leq
d$.

\smallskip

\noindent {\bf Step 2:} We choose $W\subseteq \cl(M-N(\Gamma))$ and a group $G$ of
diffeomorphisms  of $(M,\Gamma)$ leaving $W$ setwise invariant.

\smallskip

Since $\Gamma$ is connected and $M$ is irreducible, the manifold
$\cl(M-N(\Gamma))$ is irreducible.  Thus we can apply the
Characteristic Submanifold Theorem \cite{JS,Jo} to the manifold $\cl(M-N(\Gamma))$
to get a minimal family of characteristic tori.  When we split
$\cl(M-N(\Gamma))$ along these tori, $\partial N(\Gamma)$ is contained
entirely in one component, the closure of which we denote by $X$.
Since $\partial N(\Gamma)-\partial'N(E)$ is incompressible in
$\cl(M-N(\Gamma))$, $\Gamma$ has no valence one vertices.  Thus since
$\TSG_+(\Gamma,M)$ is not cyclic, it follows that $\Gamma$ contains at
least two simple closed curves.  Therefore the genus of $\partial
N(\Gamma)$ is at least two, and hence $X$ cannot be Seifert fibered.
It follows that $X$ is atoroidal, and since $M$ is irreducible, $X$ is
also irreducible.

  Let $P$ denote the set of annuli in $\partial'N(E)$ together with
the torus boundary components of $X$.  Since $\partial
N(\Gamma)-\partial'N(E)$ is incompressible in $\cl(M-N(\Gamma))$,
$\partial X-P$ is incompressible in $X$.  Thus we can now apply the
Characteristic Submanifold Theorem for Pared Manifolds \cite{JS,Jo} to
the pared manifold $(X,P)$.  Since $X$ is atoroidal, this gives us a
minimal family $\Omega$ of incompressible annuli in $X$ with
boundaries in $\partial X -P$ such that if $W$ is the closure of any
component of $X-\Omega$, then the pared manifold $(W, W\cap (P\cup
\Omega))$ is either simple, Seifert fibered, or $I$-fibered, and the
set $\Omega$ is unique up to isotopy.
  
  Let $G$ denote the collection of orientation preserving
diffeomorphisms of $(M,\Gamma)$ which leave $X$, $\Omega$, $N(V)$, and
$N(E)$ setwise invariant.  It follows from the uniqueness up to
isotopy of each of these sets that every automorphism in
$\TSG_+(\Gamma, M)$ is induced by some element of $G$.   Suppose that two elements of $G$
induce the same automorphism on $\Gamma$.  Then they induce the same
permutation on the set of components of $\partial N(V)$.  Since
$\Gamma$ has at most one edge between two vertices and every edge has
two distinct vertices, they also induce the same permutation on the
set of components of $\partial 'N(E)$.  It now follows that they
induce the same permutation on the set of annuli in $\Omega$ as well
as on the components of $X-\Omega$.

We construct a graph $\lambda$ associated with $(X,\Omega)$ by
representing the closure of each component of $X-\Omega$ by a vertex
and defining an edge between a pair of vertices if the components they
represent are adjacent. Observe that each annulus in $\Omega$ can be
capped off by a pair of disks in $N(V)$ to obtain a sphere in $M$.
Since $M$ is irreducible, any sphere in $M$ separates.  It follows
that all of the annuli in $\Omega$ separate $X$.  Thus $\lambda$ is a
tree.  Now $G$ induces a group of automorphisms on $\lambda$ which
either fixes a vertex or fixes an edge setwise.  Suppose that no
vertex of $\lambda$ is fixed by $G$.  Then some edge of $\lambda$ is
inverted by an element of $G$.  Hence there is an annulus $A\in
\Omega$ which is setwise invariant under $G$ and some element of $G$
which interchanges the components adjacent to $A$.  Thus we can define
a non-trivial homomorphism $\varphi:\TSG_+(\Gamma,M)\rightarrow
\mathbb{Z}_2$ where $\varphi(a)=1$ if and only if $a$ is induced by an
element of $G$ which interchanges the components adjacent to $A$.
However this is impossible since $\TSG_+(\Gamma,M)$ is simple and
non-abelian.  Hence there must be a vertex of $\lambda$ which is
fixed by $G$.
  
  If the action of $G$ on $\lambda$ is non-trivial, then we choose a
particular fixed vertex $w$ of $\lambda$ which is adjacent a vertex
which is not fixed by $G$.  In this case, let $W$ be the closure of
the component of $X-\Omega$ represented by $w$.  Then $W$ is setwise
fixed by $G$ and some annuli in $W\cap \Omega$ are not setwise fixed
by $G$.  Observe that all of the components of $W\cap \Omega$ are
contained in a single component of $\partial W$.  If this component of
$\partial W$ were a torus, then there would be a non-trivial
homomorphism from $\TSG_+(\Gamma,M)$ to $\mathbb{Z}_r$ where $r$ is
the number of annuli in $W\cap \Omega$.  Since $\TSG_+(\Gamma,M)$ is a
finite simple non-abelian group, this component of $\partial W$ cannot
be a torus.
  
  If the action of $G$ on $\lambda$ is trivial, we choose $W$ to be
  the closure of some component of $X-\Omega$ such that some
  components of $\partial N(V)\cap W$ are permuted by $G$.  We know
  there is such a component of $X-\Omega$ since $\TSG_+(\Gamma,M)$ is
  non-trivial and $\partial N(\Gamma)$ is contained in $X$.  Observe
  that since the annuli of $\Omega$ separate $X$, and $\partial
  N(\Gamma)$ is connected, all of the components of $W\cap \partial
  N(V)$ are contained in a single component of $\partial W$.  As
  above, if the component of $\partial W$ containing $W\cap \partial
  N(V)$ were a torus, then there would be a non-trivial homomorphism
  from $\TSG_+(\Gamma,M)$ to a finite cyclic group.  Thus again, this
  component $\partial W$ is not a torus.
  
   Since not all of the components of $\partial W$ are tori, $W$
cannot be Seifert fibered.  Thus the pared manifold $(W, W\cap (P\cup
\Omega))$ is either $I$-fibered or simple.
  
    \smallskip

\noindent {\bf Step 3:} We prove that the pared manifold $(W, W\cap
(P\cup \Omega))$ is simple.

\smallskip

 Suppose, for the sake of contradiction, that $(W, W\cap (P\cup
\Omega))$ is $I$-fibered.  Then there is an $I$-bundle map of $W$ over
a surface such that $W\cap (P\cup \Omega)$ is the preimage of the
boundary of the surface.  It follows that the corresponding $\partial
I$-bundle is $\partial N(V)\cap W$ and has either one or two
components.  If $\partial N(V)\cap W$ has two components which are
interchanged by some element of $G$, then there would be a non-trivial
homomorphism from $\TSG_+(\Gamma,M)$ to $\mathbb{Z}_2$. Thus we can
assume that each component of $\partial N(V)\cap W$ is setwise
invariant under $G$.  It now follows from our definition of $W$ that
some annulus $F_1$ in $ \Omega \cap W$ is not setwise invariant under
$G$.

Let $\{ F_1,\dots, F_r\}$ denote the orbit of $F_1$ under $G$.  Since
$r>1$, the boundary components of $F_1$ do not co-bound an annulus in
$\partial N(V)-W$.  Thus we can cap off $F_1$ in $\partial N(V)-W$ to
obtain a sphere.  Let $E_1$ denote the closure of the component of the
complement of this sphere in $M$ which is disjoint from $W$.  The
orbit of $E_1$ under $G$ is a pairwise disjoint collection
$E_1$,\dots, $E_r$ such that each $F_i\subseteq\partial E_i$. Suppose
that $E_1$ is not a ball.  Since $M$ is irreducible, $\cl(M-E_1)$ is a
ball containing $E_2$.  But since $E_2\cong E_1$ is not a ball, this
is impossible.  Thus each $E_i$ must be a ball.

Now the action of $G$ on the orbit $\{ F_1,\dots, F_r\}$ defines a
non-trivial monomorphism $\Phi:\TSG_+(\Gamma,M)\rightarrow S_r$.
Furthermore, since $\TSG_+(\Gamma,M)$ is non-abelian, $r>2$.  Hence the
base surface of the $I$-bundle has at least three boundary components.
We see as follows that $\Phi$ is onto.  Let $(ij)$ be a transposition
in $S_r$.  Then there is a $g\in G$ such that $g(F_i)=F_j$.  We saw
above that each component of $\partial N(V) \cap W$ is setwise
invariant under $G$.  Hence a boundary component of $F_i$ and its
image under the element $g$ are in the same component of $\partial
N(V) \cap W$, and they project to distinct boundary components of the
base surface of the $I$-bundle.  Let $N$ denote a regular neighborhood
in the base surface of these two boundary components together with an
arc between them.  Then $N$ is a disk with two holes.  Now since $M$
is orientable, the pre-image of $N$ in the $I$-bundle is a product
$N\times I$. We add the balls $E_i$ and $E_j$ to $N\times I$ along the annuli $F_i$ and $F_j$ respectively to obtain a solid cylinder $C\times I$.

 We will define a homeomorphism $h:(M,\Gamma)\rightarrow (M,\Gamma)$
as follows.  Let $h|E_i=g|E_i$ and $h|E_j=g^{-1}|E_j$.  Then extend
$h$ within $N\times I$ so that $h$ restricted to the cylinder
$\partial C\times I$ is the identity and $h$ leaves each of the disks
$C\times \{0\}$ and $C\times\{1\}$ setwise invariant.  Next we cap off
the solid cylinder $C\times I$ in $N(V)$ to obtain a ball or pinched
ball $B$ whose boundary intersects $\Gamma$ in either one or two
vertices.  Then extend $h$ within $B$ in such a way that $h|\partial
B$ is the identity and $h$ leaves $\Gamma \cap B$ setwise invariant.
Finally, we extend $h$ to $M-B$ by the identity.  Now by our
construction, $h:(M,\Gamma)\rightarrow (M, \Gamma)$ is an orientation
preserving homeomorphism such that $\Phi(h)=(ij)$.  It follows that
$\Phi$ is onto.  However, this is impossible since $\TSG_+(\Gamma,M)$
is a simple non-abelian group.  Thus the pared manifold $(W,W\cap
(P\cup \Omega))$ cannot be $I$-fibered, and hence must be simple.

  \smallskip

\noindent {\bf Step 4:} We define a group of isometries $K$ of
$W$ and prove $K\cong\TSG_+(\Gamma,M)$.
\smallskip

Since $W$ is simple, it follows from Thurston's Hyperbolization
Theorem for Pared Manifolds \cite{Th} applied to $(W, W\cap (P\cup
\Omega ))$ that $W-(W\cap (P\cup \Omega ))$ admits a finite volume
complete hyperbolic metric with totally geodesic boundary.  Let $D$
denote the double of $W-(W\cap (P\cup \Omega ))$ along its boundary.
Then $D$ is a finite volume hyperbolic manifold, and every element of
$\TSG_{+}(\Gamma,M )$ is induced by an element of $G$ whose
restriction to $W$ can be doubled to obtain a diffeomorphism of $D$.
Now by Mostow's Rigidity Theorem~\cite{Mos}, each such diffeomorphism
of $D$ is homotopic to an orientation preserving finite order isometry
that restricts to an isometry of $W- (W\cap(P\cup \Omega ))$.
Furthermore, the set of all such isometries generates a finite group
$K$ of isometries of $W-(W\cap(P\cup \Omega ))$.  By removing
horocyclic neighborhoods of the cusps of $W-(W\cap(P\cup \Omega ))$,
we obtain a copy of the pair $(W, W\cap (P\cup \Omega ))$ which is
contained in $W-(W\cap(P\cup \Omega))$ and is setwise invariant under
the isometry group $K$.  We shall abuse notation and consider $K$ to
be a finite group of isometries of $(W, W\cap (P\cup \Omega ))$.  Also
$K$ induces a finite group of isometries of the tori and annuli in
$W\cap (P\cup \Omega )$ with respect to a flat metric.  In particular,
$\partial N(V)\cap W$, $\partial'N(E)\cap W$, and $\Omega \cap W$ are
each setwise invariant under $K$.  Finally, it follows from
Waldhausen's Isotopy Theorem~\cite{Wa2} that each element of $K$ is
isotopic to an element of $G$ restricted to $W$ by an isotopy leaving
$W\cap (P\cup \Omega )$ setwise invariant.

We show as follows that $\TSG_+(\Gamma,M)\cong K$.  Let
$a\in\TSG_+(\Gamma,M)$ be induced by the elements $g_1$, $g_2\in G$.  Then, as we
observed in Step 2, $g_1$ and $g_2$ induce the same permutation of the
components of $\partial N(V)\cap W$, $\partial' N(E)\cap W$, and
$\Omega \cap W$.  Now $g_1|W$ and $g_2|W$ are isotopic to some $f_1$,
$f_2\in K$ by isotopies leaving $W\cap (P\cup \Omega )$ setwise
invariant.  Thus $f_1$ and $f_2$ also induce the same permutation of
the components of $\partial N(V)\cap W$, $\partial' N(E)\cap W$, and
$\Omega \cap W$.  Recall that the component of $\partial W$ which
meets $N(V)$ is not a torus.  Thus there is some component $J$ of
$\partial N(V)\cap W$ which has $r\geq 3$ boundary components $\alpha
_{1}, \dots , \alpha _{r}$.  Now $f_1(J)=f_2(J)$ and $f_1(\alpha
_{i})=f_2(\alpha _{i})$ for each $i=1, \dots , r$.  Hence
$f_1^{-1}\circ f_2$ restricts to a finite order diffeomorphism of $J$
which setwise fixes each component of $\partial J$.  Since $J$ is a
sphere with at least three holes, this implies that $f_1^{-1}\circ f_2
\vert J$ is the identity.  Finally, since $f_1$ and $f_2$ are
isometries of $W$ which are identical on the surface $J\subseteq
\partial W$, it follows that $f_1=f_2$.  Thus the automorphism $a\in
\TSG_+(\Gamma,M)$ determines a unique element of $K$, and hence there
is a well-defined homomorphism $\Phi :\TSG_+(\Gamma, M) \rightarrow
K$.  Since every element of $K$ came from such an element of
$\TSG_+(\Gamma, M)$, $\Phi$ is onto.  Now since $ \TSG_+(\Gamma,M)$ is
simple, it follows that $\TSG_+(\Gamma,M)\cong K$.

  \smallskip

\noindent {\bf Step 5:} We extend $K$ to a set $W_2$ whose boundaries
are spheres that do not bound balls in $M-W_2$ and tori that are not
compressible in $M-W_2$.
\smallskip

  Every annulus in $W\cap (P\cup \Omega )$ separates $X$ into two
components.  It follows that for any $\partial N(v)$ which meets $W$,
each component of $\partial N(v)-W$ is either a disk or an annulus.
Let $V_1$ denote the set of vertices of $\Gamma$ such that each
component of $N(V_1)$ meets $W$ and let $E_1$ denote the set of edges
such that each component of $N(E_1)$ meets $W$.  We extend $K$ to
$\partial N(V_1)-W$ as follows.  Extend each element of $K$ radially
within the disks components of $\partial N(V_1)-W$. Next consider an
annulus component $A$ of $\partial N(V_1)-W$.  The boundaries of $A$
must also be the boundary components of an annulus $A'$ in $W\cap
(P\cup \Omega)$. Since $K$ restricts to a finite group of isometries
of $A'$, we can extend $K$ to a finite group of isometries of $A$.  In
this way we have extended $K$ so that it is defined on each sphere in
$\partial N(V_1)$.  Now we extend $K$ radially within each of the
balls in $N(V_1)$ and in $N(E_1)$.  Thus we have extended $K$ to a
finite group $K_1\cong K$ acting faithfully on $W_1=W\cup N(V_1)\cup
N(E_1)$.
  
  The boundary of $W_1$ consists of spheres and tori made up of the
union of annuli in $\Omega\cap W$ with disks and annuli in $\partial
N(V)-W$, together with the tori components of $\partial X\cap W$.  Let
$\{T_1,\dots, T_q\}$ denote the tori components of $\partial W_1$
which are compressible in $M-W_1$.  Since the set $\{T_1,\dots, T_q\}$
is setwise invariant under $G$, the set $\{T_1,\dots, T_q\}$ must be
setwise invariant under $K_1$ as well.  Now we can choose a set of
pairwise disjoint compressing disks $\{D_1,\dots, D_r\}$ for
$\{T_1,\dots, T_q\}$ whose boundaries are setwise invariant under
$K_1$.  Note that depending on the action of $K$ on each $T_i$, we may
have $r>q$.  We add a product neighborhood of each $D_i$ to $W_1$ to
obtain a manifold whose boundary contains more spheres than $\partial
W_1$ but contains no tori which are compressible in $M-W_1$.  We
extend $K_1$ to these product neighborhoods by defining it radially
within each parallel disk.  Furthermore, for any boundary component of
the union of $W_1$ together with these product neighborhoods which
bounds a ball in $M-W_1$, we add that ball and extend $K_1$ radially
within the ball.  Thus we have extended $K_1$ to a finite group
$K_2\cong K_1$ acting faithfully on the manifold $W_2$ consisting of
$W_1$ together with these product neighborhoods and balls. Observe
that all of the components of $\partial W_2$ are either spheres which
do not bound a ball in $M-W_2$ or tori which are not compressible in
$M-W_2$.

 \smallskip

\noindent {\bf Step 6:} We prove $\order(\mathrm{TSG}_+(\Gamma,M))=\order(K)\leq d$ by considering
3 cases.

  \smallskip
  
\noindent {\bf Case 1:} Some component of $\partial W_2$ is a sphere $S$.  
\smallskip

Since $M$ is irreducible and $S$ does not bound a ball in $M-W_2$, $S$
must bound a ball $B$ containing $W_2$.  Now any other sphere in
$\partial W_2$ separates $M$ such that the component of the complement
which is disjoint from $W_2$ is contained in $B$.  Since $B$ is a
ball, this component is also a ball.  As this is contrary to our
definition of $W_2$, all of the components of $\partial W_2$ other
than $S$ must be tori.  By gluing another ball to $ B$ we obtain
$S^3$ such that each of the tori in $\partial W_2$ bounds a (possibly
trivial) knot complement in $S^3$ that is disjoint from $W_2$.  We extend $K_2$ radially within the complementary ball.  Then we
replace each knot complement by a solid torus in such a way we
obtain a homology sphere and we can extend $K_2$ radially within the
solid tori.  In this way we get an isomorphic finite simple non-abelian
group $K_3$ of orientation preserving diffeomorphisms of a homology
sphere.  Now it follows from Zimmerman \cite{Zi} that $K_3\cong A_5$.
Thus $\order(K)=\order( K_3)=60\leq d$.

 \smallskip
  
\noindent {\bf Case 2:} $\partial W_2$ has torus components but no spherical components.

\smallskip

 Recall that every component of $\partial W_2$ is incompressible in
 $M-W_2$.  Suppose that $T$ is a torus component of $\partial W_2$
 which is compressible in $W_2$.  Let $N$ be a product neighborhood of
 a compressing disk in $W_2$. By cutting $T$ along $\partial N\cap T$ and
 capping off with the two disks in $\partial N-T$, we obtain a sphere
 $S$.  Since $T$ is incompressible in $M-W_2$, the component of $M-S$
 which is disjoint from $W_2-N$ is not a ball.  Thus the component of $M-S$ containing $W_2-N$ must be a
 ball.  Hence the union of this ball and the neighborhood $N$ is a
 solid torus $V$ such that $\partial V=T$ and $W_2\subseteq V\subseteq
 M$.
 
 Let $T_1$, \dots, $T_r$ denote the boundary components of $W_2$, and
 suppose that every $T_i$ is compressible in $W_2$. Then, by the
 argument above, each $T_i$ bounds a solid torus $V_i$ such that
 $W_2\subseteq V_i\subseteq M$.  Now $G$ induces an orientation
 preserving finite action on the solid tori $V_1$, \dots, 
 $V_r$ taking meridians to meridians, up to isotopy.  Since $G$
 induces a finite action on the set of tori $\{T_1, \dots, T_r\}$ on
 the level of homology, this means there is also a set of longitudes
 $\{\ell_1, \dots, \ell_r\}$ which are setwise invariant under $G$ up
 to isotopy.  Hence the action that $K_2$ induces on the set of tori
 $\{T_1, \dots, T_r\}$ leaves the set of longitudes $\{\ell_1,
 \dots, \ell_r\}$ setwise invariant up to isotopy.

We obtain a homology sphere $W_3=W_2\cup U_1\cup\dots\cup U_r$ by
gluing a solid torus $U_i$ along each boundary component $T_i$ of
$W_2$ so that a meridian $\mu_i$ of $U_i$ is glued to the longitude
$\ell_i$.  Thus $K_2$ leaves the set of meridians $\{\mu_1, \dots,
\mu_r\}$ setwise invariant up to isotopy.  Now since the action of
$K_2$ on on the set of tori $\{\partial U_1, \dots, \partial U_r\}$ is
finite, for some $q\geq r$, we can find a set of
pairwise disjoint meridians $\{m_1,\dots, m_q\}$ for the solid tori $\{U_1, \dots,U_r\}$ which is setwise
invariant under $K_2$.  Now extend $K_2$ radially from the set of
meridians $\{m_1,\dots, m_q\}$ to a set of pairwise disjoint
meridional disks for the solid tori $U_1$, \dots, $U_r$.  These
meridional disks cut the solid tori $U_1$, \dots, $U_r$ into a set of
solid cylinders, and hence we can also extend $K_2$ radially within
this set of solid cylinders.  In this way we obtain a finite group
$K_3\cong K_2$ acting faithfully on the homology sphere $W_3$.  Thus
it again follows from Zimmerman \cite{Zi} that $K_3\cong A_5$, and
hence $\order(K)=\order( K_3)=60\leq d$.

Therefore, we can assume that some component of $\partial W_2$ is an
incompressible torus in $M$.  If a torus component of $\partial W$ is
setwise fixed by $K_2$, then $K_2$ restricts to a faithful action of
the torus, which is impossible since $K_2$ is a finite simple
non-abelian group.  Thus some incompressible
boundary component of $\partial W_2$ has non-trivial orbit
$\{T_1,\dots, T_q\}$ under $K_2$.  Now either each $T_i$ is isotopic
to a torus in the characteristic family $\Theta $ or each $T_i$ is vertical in a closed up
Seifert fibered component of $M-\Theta $.  Suppose that each $T_i$ is
isotopic to a torus in $\Theta $.  Then without loss of generality we
can assume that each $T_i$ is in $\Theta $.  It follows that there is
a non-trivial monomorphism from $K_2$ to $S_m$ (where $m$ is the
number of tori in $\Theta $).  Hence $\order(K)=\order(K_2)\leq m!\leq
d$.

Thus we can assume that each $T_i$ is a vertical torus in a closed up
Seifert fibered component of $M-\Theta $ and $T_i$ is not isotopic to a torus in $\Theta$.  Let $C_1$, \dots, $C_r$
denote all of the closed up Seifert fibered components of $M-\Theta
$. The action of $K_2$ on the set $\{T_1,\dots, T_q\}$ induces an
action on the set $\{C_1\cap W_2,\dots, C_r\cap W_2\}$, which in turn
defines a homomorphism from $K_2$ to $S_r$.  Since $K_2$ is simple,
either this homomorphism is trivial or $\order(K)=\order(K_2)\leq
r!\leq m!\leq d$.  

Therefore, we can assume that the homomorphism is
trivial, and hence the orbit $\{T_1,\dots, T_q\}$ is contained
in a single Seifert fibered component $C$.  Now there is a non-trivial
monomorphism from $K_2$ to $S_{q}$.  Since $K_2$ is simple and non-abelian, $q\geq 5$.  In
particular, the Seifert fibered space $C\cap W_2$ is a Haken manifold
with more that two boundary components.  Now it follows from
Waldhausen \cite{Wa1} that the fibration on $C\cap W_2$ is unique up
to isotopy.  Hence by Meeks and Scott \cite{MS}, $C\cap W_2$ has a
$K_2$-invariant fibration.  Thus $K_2$ induces an action on the base
surface of the fibration of $C\cap W_2$.  Let $F$ be the base surface
for the Seifert fibration of $C$, and let $F'$ be the base surface for
the Seifert fibration of $C\cap W_2$.  Since the vertical tori $T_i$
are incompressible in $C$, none of the boundary components of $F'$
bounds a disk in $F$.  Furthermore, since there are at least five such
tori, $\chi(F)\leq \chi(F') <0$.  Note that since the action of $K_2$
on the the set $\{T_1,\dots, T_q\}$ is non-trivial, $K_2$ cannot take each fiber of $C\cap
W_2$ to itself. Thus $K_2$ induces an isomorphic action on $F'$.  Now
using Hurwitz \cite{Hu}, we have $\order(K)=\order(K_2) \leq
84|\chi(F')|\leq 84 |\chi(F)|\leq d$.
 
\smallskip
  
\noindent {\bf Case 3:}  $\partial W_2$ is empty.
\smallskip

In this case, $W_2$ is the 3-manifold $M$.  Suppose
that the set of characteristic tori $\Theta$ is non-empty.  Since
$\Theta $ is unique up to isotopy, we can find an isotopic set of tori
$\Theta'$ which are setwise invariant under $K_2$.  Thus there is a
homomorphism from $K_2$ to $S_m$ (recall that $m$ is the number of tori in $\Theta$).  Furthermore, since $K_2$ is a
non-trivial finite simple non-abelian group of orientation preserving
diffeomorphisms of $W_2$, $K_2$ cannot leave a torus setwise
invariant.  Thus the homomorphism is injective, and hence
$\order(K)=\order(K_2)\leq m!\leq d$.

Therefore we can assume that $\Theta $ is empty.  If $M$ is not Seifert
fibered, then $\order(K)=\order(K_2)\leq d$ by Case 2 of Step 1.  So
we can further assume that $M$ is Seifert fibered.  Now by Waldhausen
\cite{Wa1}, if $M$ is a closed Haken manifold other than the 3-torus
and the double of the twisted $I$-bundle over a Klein bottle, then $M$
has a fibration which is unique up to isotopy.  Also, by
Ohshika \cite{Oh}, if $M$ is a non-Haken manifold with infinite
$\pi_1$, then $M$ has a fibration which is unique up to isotopy.
Since $M$ is irreducible, if the fibration is unique up to isotopy,
then we can apply Meeks and Scott \cite{MS} to get a $K_2$-invariant
fibration.  In this case, we can argue as in the end of Case 2 to again conclude that $\order(K)\leq d$.

Thus we an assume that either $M$ is the 3-torus, $M$ is the double of
the twisted $I$-bundle over a Klein bottle, or $M$ has finite
fundamental group.  Observe that the 3-torus and the twisted
$I$-bundle over a Klein bottle both have flat geometry.  By Meeks and
Scott \cite{MS} any smooth finite group action of a flat manifold
preserves the geometric structure.  However, by considering the lift
of the action to $\mathbb{R}^3$ we see that no finite simple
non-abelian group can act geometrically and faithfully on either the
3-torus or the twisted $I$-bundle over a Klein bottle.  Thus $M$
cannot be either of these manifolds.

Finally, suppose that $M$ has finite fundamental group.  Now by the
proof of the Elliptization Conjecture \cite{MT}, $M$ has
elliptical geometry, and hence by Dinkelbach and Leeb \cite{DL} we can
assume that $K_2$ acts geometrically on $M$.  However by the
classification of orientation preserving isometry groups of elliptic
3-manifolds of Kalliongis and Miller \cite{KM} and McCullough
\cite{Mc}, no finite simple non-abelian geometric group action of an
elliptic 3-manifold has order greater than 60.  Thus $\order(K_2)\leq
60\leq d$. \end{proof}

\begin{proof}[Proof of Theorem \ref{mainthm}] 
Let $d$ be the number given by Proposition \ref{mainprop} for the
manifold $M$ and choose $n>d$.  Then the alternating group $A_n$ is a
non-abelian simple group, and by Proposition \ref{mainprop} no
embedding of any graph $\Gamma$ in $M$ has $\TSG_+(\Gamma,M)\cong A_n$.  Now
since $\TSG_+(\Gamma,M)$ is either equal to $\TSG(\Gamma,M)$ or is a
normal subgroup of $\TSG(\Gamma,M)$ of index 2, there is no embedding
of a graph $\Gamma$ in $M$ such that $\TSG(\Gamma,M)\cong A_n$.\end{proof}

\section{Proofs of Theorems \ref{univthm} and \ref{rmk1}}

 In contrast with Theorem 1, we prove in Theorem 2 that if the
manifold $M$ can vary (even among the hyperbolic rational homology
spheres) then the collection of topological symmetry groups of
embedded graphs is universal.  We begin with the following
proposition.
 
\begin{prop}
Let $M$ be a connected 3-manifold, and let $G$ be a finite group of
diffeomorphisms acting freely on $M$. Then there is a graph $\Lambda$ graph
embedded in $M$ such that $\TSG(\Lambda,M) \cong G$.
\end{prop}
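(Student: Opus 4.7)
The plan is to use Frucht's theorem together with an equivariant embedding. By a suitable form of Frucht's theorem, one can choose a finite connected graph $\gamma$ with $\Aut(\gamma) \cong G$ such that $G$ acts freely on both the vertices and the (unordered) edges of $\gamma$, and moreover no element of $G$ interchanges the endpoints of any edge. The standard Cayley-graph-with-decorations construction achieves this directly when the generating set has no involutions; when $G$ can only be generated by involutions (for example $G = (\Z/2)^n$), each offending edge is replaced by a small asymmetric gadget (e.g.\ a decorated $6$-cycle) that absorbs the endpoint-interchange into a permutation of added gadget vertices without introducing extra graph automorphisms.

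Next, I would construct a $G$-equivariant embedding of $\gamma$ in $M$. Decompose $V(\gamma)$ and $E(\gamma)$ into $G$-orbits. For each vertex orbit, pick a representative, embed it at a generic point of $M$, and use the $G$-action to embed the full orbit as $|G|$ distinct points; varying the representatives over different orbits keeps all vertices of $\Lambda$ distinct. For each edge orbit, pick a representative edge and an embedded arc in $M$ joining its already-embedded endpoints, and take $G$-translates for the rest of the orbit. Since $\dim M = 3$ and $G$ acts freely on both $M$ and on the edges of $\gamma$, a standard transversality argument arranges all arcs in this total family to be pairwise disjoint except at common endpoints, with interiors disjoint from the vertex set. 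Call the resulting embedded graph $\Lambda$.

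By construction, the given $G$-action on $M$ preserves $\Lambda$ and realizes the prescribed action on $\gamma$, giving $G \hookrightarrow \TSG(\Lambda, M)$. Conversely, every element of $\TSG(\Lambda, M)$ induces an automorphism of the abstract graph $\gamma$ and hence lies in $\Aut(\gamma) \cong G$. Thus $\TSG(\Lambda, M) \cong G$, as required.

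The main obstacle is the combinatorial preparation of $\gamma$ satisfying the endpoint-non-interchange condition. If some $g \in G$ reversed an edge of $\gamma$, then the corresponding embedded arc in $M$ would have to be reversed by $g$, forcing $g$ to have a fixed point on that arc; since $G$ acts freely on $M$, this is impossible. Engineering a Frucht-type $\gamma$ that simultaneously realizes $\Aut(\gamma) \cong G$ and supports a free action of $G$ on both vertices and unordered edges (with no edge-reversals) is the only delicate step; once $\gamma$ is in place, both the equivariant embedding and the identification of $\TSG(\Lambda, M)$ with $G$ are routine.
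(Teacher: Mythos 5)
Your strategy is sound and genuinely different from the paper's. The paper does not invoke Frucht's theorem at all: it takes a deliberately \emph{over}-symmetric graph (the complete bipartite graph between a set $V$ of $n=|G|$ vertices and a set $U\cup W$ of $2n$ vertices, embedded equivariantly as orbits of three points), then marks the $G$-orbits of edges by subdividing the $i$-th orbit with $i$ valence-two vertices, and finally uses a topological argument -- any homeomorphism realizing an automorphism must carry each edge to an edge in the same $G$-orbit, so after composing with some $g^{-1}\in G$ one gets a map fixing an edge and both endpoints, which an induction over adjacent edges (using freeness of the action and the bipartite structure) shows acts trivially. Thus the paper accepts $\Aut(\gamma)\gg G$ and lets the topology cut $\TSG(\Lambda,M)$ down to $G$, whereas you front-load all the work into the combinatorics so that $\TSG(\Lambda,M)\le\Aut(\gamma)\cong G$ is automatic. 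Your route buys a cleaner endgame; the paper's buys a trivial combinatorial input at the cost of the inductive rigidity argument.

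Two caveats on your version. First, the ``suitable form of Frucht's theorem'' you need -- $\Aut(\gamma)\cong G$ with the action free on vertices \emph{and} on unordered edges -- is the real content of your proof and is not the statement of Frucht's theorem; it must be proved. Your sketch is essentially right, but note that the gadget replacing an edge $\{g,gs\}$ for an involution $s$ cannot be asymmetric: the element $gsg^{-1}$ still acts on the decorated graph and swaps the two attachment points, so the gadget must \emph{admit} that swap as an automorphism acting freely on its vertices and edges (e.g.\ the antipodal rotation of a $6$-cycle, decorated to kill reflections), while contributing no further automorphisms; one must then verify $\Aut$ of the decorated Cayley graph is still exactly $G$. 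Second, ``a standard transversality argument'' for making a $G$-equivariant family of arcs disjoint is too quick, since perturbing one arc perturbs all its translates simultaneously; the clean fix is the paper's device of passing to the manifold $M/G$ (the action is free, so the quotient map is a covering), putting the projected arcs in general position there, and lifting. With those two points made precise, your argument is complete.
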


\begin{proof} 
Let $n=\order(G)$.  If $n=1$ or $2$, we can choose $\Lambda$ to be a single vertex or a single edge in $M$,
respectively.  Thus we assume that $n>2$.
 
 Let $U$, $V$, and $W$ be sets of $n$ vertices each.  Let
$\gamma$ be the abstract graph with vertices in $U\cup V\cup
W$ and an edge between a pair of vertices if and only if
precisely one of the vertices is in $V$.  Then every automorphism of
$\gamma$ leaves the set $V$ setwise invariant since the valence of the
vertices in $V$ is twice that of the vertices in $U\cup W$.  It
follows that if an automorphism of $\gamma$ setwise fixes an edge,
then it fixes both vertices of that edge.

 We embed $\gamma$ in $M$ as follows.  Let $u$, $v$, and $w$ be points
in $M$ whose orbits under $G$ are disjoint.  Embed the sets $U$, $V$, and $W$ as the orbits of $u$, $v$, and $w$
respectively under $G$.  We abuse notation and refer to both the
abstract and embedded sets of vertices as $U$, $V$, and $W$.
Since $G$ acts freely on $M$, $G$ induces a faithful action of the
abstract graph $\gamma$ such that no non-trivial element of $G$ fixes
any vertex or inverts any edge of $\gamma$.  Furthermore, the quotient map
$\pi:M\to M/G$ is a covering map and $M/G$ is a 3-manifold.
 
 Let $\{\varepsilon_1,\dots, \varepsilon_m\}$ consist of one
representative from each orbit of the edges of the abstract graph
$\gamma$ under $G$, and for each $i$ let $x_i$ and $y_i$ denote the
embedded vertices of $\varepsilon_i$.  Since $M$ is path connected,
for each $i$ we can choose a path $\alpha_i$ in $M$ from $x_i$ to
$y_i$ and let $\alpha_i'=\pi(\alpha_i)$.  Since $G$ leaves each of
$U$, $V$, and $W$ setwise invariant, each $\alpha_i'$ has distinct
endpoints.  Now, by general position in $M/G$, we can homotop each
$\alpha_i'$ fixing its endpoints to a simple path $\rho_i'$ such that
the interiors of the $\rho_i'$ are pairwise disjoint and are disjoint
from $\pi(V\cup U\cup W)$.  For each $i$, let $\rho_i$ denote the lift
of the path $\rho_i'$ beginning at $x_i$.  Then $\rho_i$ is a simple
path in $M$, and since $\rho_i'$ is homotopic fixing its endpoints to
$\alpha_i'$, the other endpoint of $\rho_i$ is $y_i$.  For each $i$,
embed the abstract edge $\varepsilon_i$ as the image of $\rho_i$ in
$M$.
 
 Now let $\varepsilon$ be an arbitrary edge of $\gamma$.  Since no
edge of $\gamma$ is setwise fixed by a non-trivial element of $G$, there is a
unique $g\in G$ and $i$ such that $\varepsilon=g(\varepsilon_i)$.
Hence we can unambiguously embed $\varepsilon$ as $g(\rho_i)$.  Let
$\Gamma$ consist of the embedded vertices $V\cup U\cup W$ together
with embeddings of the edges of $\gamma$ defined in this way.  It
follows from our choice of the paths $\rho_i'$ in $M/G$ that these
embedded edges are pairwise disjoint and their interiors are disjoint
from the set of vertices $V\cup U\cup W$.  Thus $\Gamma$ is indeed an
embedding of $\gamma$ in $M$, and is setwise invariant under $G$.
 
 Now let the set $\{\rho_1,\dots, \rho_m\}$ consist of one representative
from each orbit of the embedded edges of $\Gamma$ under $G$.  We
create a new embedded graph $\Lambda$ by adding $i$ vertices of
valence 2 to the interior of every edge in the orbit of $\rho_i$ in such
a way that $G$ leaves $\Lambda$ setwise invariant.  Then $G$ induces a
faithful action on $\Lambda$, and hence is isomorphic to a subgroup of
$\TSG(\Lambda,M)$.
  
We prove as follows that $G\cong \TSG(\Lambda,M)$.  Let $h$ be a
homeomorphism of $M$ inducing a non-trivial automorphism of $\Lambda$.
Since $\Gamma$ has no vertices of valence 2, $h$ leaves $\Gamma$
setwise invariant inducing a non-trivial automorphism of $\Gamma$.
Hence there is some edge $e$ of $\Gamma$ such that $h(e)\not=e$.  Now
$e$ is in the orbit of some $\rho_i$ under $G$, and hence as a path in
$\Lambda$, $e$ contains precisely $i$ vertices of valence 2.  Thus
$h(e)$ also contains precisely $i$ vertices of valence 2, and hence is
also in the orbit of $\rho_i$ under $G$.  It follows that for some $g\in
G$, $g(e)=h(e)$.  Now $f=g^{-1}h$ is a homeomorphism of $(M, \Gamma)$
taking $e$ to itself, and hence fixing both vertices of $e$ as an edge
in $\Gamma$.
 
 Suppose, for the sake of contradiction, that there is some edge $e'$ adjacent to $e$ such that $f(e')\not=e'$.  Since $f$ fixes
both vertices of $e$, $f$ must fix the vertex $x=e\cap e'$.  By
repeating the above argument with $f(e')$ instead of $h(e)$, we see
that there is a $g_1\in G$ such that $g_1(e')=f(e')$.  However,
since $G$ acts freely on $M$, $g_1$ cannot fix $x$.  Thus $g_1(e')$ has vertices $x$ and $g_1(x)$.  Since $g_1(e')$ is an edge of
$\Gamma$, precisely one of its vertices is contained in $V$.
But this is impossible since $g_1$ leaves $V$ setwise invariant.
Thus $f(e')=e'$, and hence inductively we see that $f$ leaves every
edge of $\Gamma$ setwise invariant.  Since $f$ cannot interchange the
vertices of any edge of $\Gamma$, $f$ induces the trivial automorphism
on $\Gamma$ and hence on $\Lambda$ as well.  Thus, $g$ induces the
same automorphism as $h$ on $\Lambda$.  It follows that
$\TSG(\Lambda,M) \cong G$.\end{proof}
\smallskip

Now Theorem \ref{univthm} follows immediately from Proposition~2, since
Cooper and Long \cite{CL} have shown that for every finite group $H$,
there is a hyperbolic rational homology 3-sphere $M$ with a group of
diffeomorphisms $G\cong H$ such that $G$ acts freely on $M$.  

\smallskip
  
 It was proved in \cite{FNPT} that if a 3-connected graph $\Gamma$ is
embedded in $S^3$, then $\TSG_+(\Gamma,S^3)$ is isomorphic to a
subgroup of the group of orientation preserving diffeomorphisms
$\Diff_+(S^3)$.  We show below that this is not true for all 3-manifolds.
 
\begin{proof}[Proof of Theorem \ref{rmk1}]  
By the Geometrization Theorem \cite{MF, MT, MT2}, $M$ can be decomposed into geometric pieces. Also, since $M$ is irreducible and not Seifert fibered, $M$
does not admit a circle action.  Furthermore, by the Elliptization
Theorem \cite{MT}, a 3-manifold with finite fundamental group is
elliptic and hence Seifert fibered.  Thus $M$ has infinite $\pi_1$,
has no circle action, and is irreducible.  Hence by Kojima \cite{Ko}
there is a bound on the order of finite groups of diffeomorphisms of
$M$.  In particular, there is a prime $p>3$ such that $\mathbb{Z}_p$
is not a subgroup of $\Diff_+(M)$.  Now it follows from \cite{Fl} that
there is an embedding $\Delta$ of the complete graph $K_p$ in the
interior of a ball $B$ such that $(B,\Delta)$ has an orientation
preserving diffeomorphism $h$ which induces an automorphism of order
$p$ on $\Delta$.  Since $h$ is orientation preserving $h$ is isotopic
to the identity on $B$.  Thus we can modify $h$ by an isotopy to get a
diffeomorphism $g$ of $(B,\Delta)$ such that $g|\partial B$ is the
identity and $g$ induces an automorphism of order $p$ on $\Delta$.
Now we embed $B$ in $M$, and extend $g$ to $M$ by the identity.  This
gives us an embedding $\Gamma$ of $K_p$ in $M$ with $\mathbb{Z}_p \leq
\TSG_+(\Gamma)$.  It follows that $\TSG_+(\Gamma)$ cannot be
isomorphic to any finite subgroup of $\Diff_+(M)$.  Finally, since $p>3$, $K_p$ is 3-connected.
\end{proof}


\begin{thebibliography}{Fl}
\bibitem{Bo1} F. Bonahon, {\it Geometric structures on 3-manifolds},
Handbook of Geometric Topology, 93--164, North-Holland, Amsterdam, 2002.

\bibitem{CL} D. Cooper and D. Long, {\it Free actions of finite groups
on rational homology 3-spheres}, Topology Appl. {\bf 101} (2000), 
143--148.

\bibitem{DL} J. Dinkelbach and B. Leeb, {\it Equivariant Ricci flow
with surgery and applications to finite group actions on geometric
3-manifolds}, Geom. Topol. {\bf 13} (2009), 1129--1173.

\bibitem{Fl} E. Flapan, {\em Rigidity of graph symmetries in the 3-sphere},
J. Knot Theory Ramifications {\bf 4} (1995), 373--388.

\bibitem{FNPT} E. Flapan, R. Naimi, J.  Pommersheim, and H. Tamvakis,
{\it Topological symmetry groups of embedded graphs in the
$3$-sphere}, Comment. Math. Helv. {\bf 80} (2005), 317--354.

\bibitem{Fr} R. Frucht, {\it Herstellung von Graphen mit vorgegebener
abstrakter Gruppe}, Compositio Math. {\bf 6} (1939), 239--250.


\bibitem{Hu} A. Hurwitz, {\it \"{U}ber algebraische Gebilde mit
Eindeutigen Transformationen in sich}, Math. Annalen {\bf 41}
(1893), 403--442


\bibitem{JS} W. H. Jaco and P. B. Shalen, 
{\it Seifert Fibred Spaces in 3-manifolds}, 
Mem. Amer. Math. Soc. {\bf 21} (1979), no. 220.


\bibitem {Jo} K. Johannson, {\it Homotopy equivalences of 3-manifolds
with boundaries}, Lecture Notes in Mathematics, {\bf 761}. {\it
Springer, Berlin}, (1979).

\bibitem{KM} J. Kalliongis and A. Miller, {\it Geometric group actions
on lens spaces}, Kyungpook Math. J. {\bf 42} (2002), 313--344.

\bibitem {Ko} S. Kojima, {\it Bounding finite groups acting on
3-manifolds}, Math.  Proc. Cambridge Philos. Soc. {\bf 96} (1984), 
269--281.

\bibitem{Mc} D. McCullough, {\it Isometries of elliptic 3-manifolds},
J. London Math Soc. {\bf 65} (2002), 167--182.

\bibitem{MS} W. Meeks and P. Scott, {\it Finite group actions on
3-manifolds}, Invent. Math. {\bf 86} (1986), 287--346.

\bibitem {Moi} E. E. Moise, {\it Geometric Topology in Dimensions $2$
and $3$}, {\it Springer-Verlag} New York (1977)

\bibitem{MF} J. W. Morgan and F. T.-H. Fong, {\em Ricci Flow and
Geometrization of 3-manifolds}, University Lecture Series {\bf 53},
American Mathematical Society, Providence, RI, (2010).


\bibitem{MT} J. Morgan and G. Tian, {\it Ricci flow and the Poincare
conjecture}, Clay Mathematics Monographs, {\bf 3}, American
Mathematical Society, Providence, RI; Clay Mathematics Institute,
Cambridge, MA, 2007.

\bibitem{MT2} J. Morgan and G. Tian, {\it Completion of the proof of
the Geometrization Conjecture}, arXiv:0809.4040


\bibitem {Mos} G. Mostow, {\it Strong Rigidity of
Locally Symmetric Spaces}, Annals of Mathematics Studies {\bf 78}, 
{\it Princeton University Press}, Princeton, NJ, 1973.


\bibitem{Oh} K. Ohshika, {\it Teichm\"{u}ller spaces of Seifert
fibered Manifolds with infinite $\pi_1$}, Topology
Appl. {\bf 27} (1987), 75--93.



\bibitem{Si} J. Simon, {\it Topological chirality of certain
molecules}, Topology {\bf 25} (1986), 229--235.

\bibitem{Th} W. Thurston, {\it Three-dimensional manifolds, Kleinian
groups and hyperbolic geometry}, Bull. Amer. Math.  Soc. {\bf 6}
(1982), 357--381.

\bibitem{Wa1} F. Waldhausen, {\it Eine Klasse von 3-dimensionalen
Mannigfaltigkeiten II}, Invent. Math {\bf 4} (1967), 87--117.



\bibitem{Wa2} F. Waldhausen, {\it On irreducible 3-manifolds which are
sufficiently large}, Ann. Math. {\bf 87} (1968), 56--88.


\bibitem{Zi} B. Zimmerman, {\it On finite simple groups acting on
homology 3-spheres}, Topology Appl. {\bf 125} (2002), 199--202.



\end{thebibliography}
  \end{document}